\def\beq{\begin{equation}}
\def\eeq{\end{equation}}
\def\bp{\mathbf{p}}
\def\bq{\mathbf{q}}
\def\cO{\mathcal{O}}
\def\cM{\mathcal{M}}
\def\bv{\mathbf{v}}
\newtheorem{lemma}{Lemma}[section]
\newtheorem{proposition}[lemma]{Proposition}
\numberwithin{equation}{section}
\begin{document}

\title[Stable regimes for hard disks]{Stable regimes for hard disks in a channel with twisting walls}

\author{N. Chernov, A. Korepanov, N. Sim\'{a}nyi}
\address{Department of Mathematics,
University of Alabama at Birmingham, Birmingham, AL 35294}

\begin{abstract}
We study a gas of $N$ hard disks in a box with semi-periodic boundary
conditions. The unperturbed gas is hyperbolic and ergodic (these facts are
proved for $N=2$ and expected to be true for all $N\geq 2$). We study
various perturbations by twisting the outgoing velocity at collisions with
the walls. We show that the dynamics tends to collapse to various stable
regimes, however we define the perturbations and however small they are.
\end{abstract}

\date{\today}

\maketitle

\noindent Keywords: Hard balls, hard disks, hyperbolicity, ergodicity,
stability, nonequilibrium dynamics, steady states, SRB measures.\\

\textbf{Gas of hard balls is a classical model of statistical
mechanics. The chaotic motion of its molecules can explain many
phenomena observed in real gases and fluids. We study a 2D gas of hard
disks placed in a channel whose walls act as ``Maxwell demon'' that
makes the angle of reflection different from the angle of incidence
(twisting the outgoing velocity vectors). We present quite unexpected
patterns developed by the gas dynamics under various twisting rules.}

\section{Introduction}

The gas of hard balls is a classical model of statistical mechanics.
Hard ball collisions produce strong scattering effect which makes the
system behave chaotically and quickly relax to equilibrium (at least
locally). In mathematical terms, the gas of hard balls in finite
container is widely regarded as a hyperbolic dynamical system (i.e.,
its Lyapunov exponents should not be zero) and its natural invariant
measure (Liouville measure) is expected to be ergodic and mixing.

More precisely, the celebrated Boltzmann-Sinai ergodic hypothesis
states that the gas of $N$ hard balls in a container with periodic
boundary conditions (a torus) is hyperbolic, ergodic, mixing (and
Bernoulli). Attempts to prove this conjecture have long history
\cite{Si63,Si70,Sz96}, and at present it is proven under various
conditions \cite{KSS91,Sim02,Sim03,Sim04,Sim06}, but not yet in its
full generality.

We note that the gas of $N$ hard balls on a $d$-dimensional torus has
$d+1$ integrals of motion: its kinetic energy and its total momentum (a
$d$-vector) are preserved, hence the position of the center of mass can
be fixed. This leads to a great reduction of the phase space
eliminating a total of $2d+1$ dimensions.

If the container has rigid walls, the total momentum is no longer
preserved, thus the phase space has higher dimensionality and is more
complicated; such gases are even harder to study. Hyperbolicity and
ergodicity have been proven only for $N=2$ balls in a rectangular box
(when the disks are no too large) \cite{Sy99} and for $N\geq 2$ disks
in a very special 2D container with curved walls where each disk is
confined to its own cell \cite{BLPS}.

We are interested in a gas of $N$ hard disks in a 2D rectangular container
with \emph{partial periodicity}, i.e., where two opposite walls are rigid
but in the other direction boundary conditions are periodic (such a
container can be regarded as a \emph{cylinder}, rather than a
\emph{torus}). The hyperbolicity and ergodicity for such a gas are proved
only for $N=2$, see \cite{Sy99}, but these properties are undoubtedly
valid for all $N \geq 2$.

The dynamics of this gas becomes more intriguing if
small driving forces are added at the rigid walls,
i.e., when collisions of the disks with the walls are
modified by stochastic or deterministic perturbations.
We consider deterministic perturbations where the
angle of reflection is no longer equal to the angle of
incidence, i.e., the velocity vector of the colliding
disk is ``twisted'' a little right after the
collision. Such twisting collision rules may appear
physically unrealistic (they belong to the
``Maxwell-demon'' type of external forces), but they
produce very realistic and interesting nonequilibrium
phenomena such as shear flow and entropy production
\cite{CL97}.

It is commonly expected (and observed empirically \cite{CL97}) that the
gas of hard balls under small perturbations remains chaotic and has a
unique nonequilibrium stationary state, perhaps in the form of a
Sinai-Ruelle-Bowen (SRB) measure, i.e., an ergodic measure with smooth
conditional densities on unstable manifolds. These facts have been
actually proven for $N=1$ particle in the the 2D periodic Lorentz gas
under two types of small perturbations: external fields
\cite{CELS,Ch01,CD09} and twisting collision rules \cite{Zh}. Similar
results were obtained for various classes of bounded billiard tables
with twisting walls \cite{AMS,MPS}.

We note that the unperturbed Lorentz gas with $N=1$ particle is a
uniformly hyperbolic dynamical system. Under small perturbations, uniform
hyperbolicity usually survives and makes the construction of an SRB
measure possible.

The gas of $N\geq 2$ hard balls, on the other hand, is \emph{never}
uniformly hyperbolic. Non-uniform hyperbolicity can be easily destroyed
even by arbitrary small perturbations making it hard to control the
perturbed dynamics. For this reason there are no theoretical proofs of
hyperbolicity or the existence of SRB measures under any perturbations,
to our knowledge. Moreover, one may expect that the perturbed gas is
not always fully hyperbolic or ergodic, i.e., that there may be
elliptic islands or multiple ergodic components, etc.

The modest purpose of this paper is to point out that things are
actually much worse, even for the gas of $N=2$ disks. We show that if
the disks are not too big (if their diameters are less than 0.5), then
an arbitrarily small twist added to collisions at the walls tends to
destroy the chaotic behavior of the gas causing a complete collapse of
the dynamics in a way that almost every phase trajectory converges to
some trivial stable regimes. The collapse seems to happen for every
type of small twists, though the limit stable regimes may be very
different and sometimes quite bizarre.

Most of the stable regimes we observed for $N=2$ also appear for any
number $N>2$ of disks (provided their diameter is small enough). On the
other hand, when the disks are not too small, then our stable regimes
seem to disappear. We believe that if the diameters of the disks are
greater than $1/N$, then the gas is fully chaotic and SRB measures
exist, in agreement with numerical evidence \cite{CL97}.

This work is motivated by discussions with J. Lebowitz and Ya. Sinai.
Our original purpose was to prove hyperbolicity and construct SRB
measures for at least some types of twisting collisions. However, we
discovered that at sufficiently low densities all twists cause a total
collapse (often to our surprise). Thus in the end we decided to report
our findings in this paper.

\section{Model}  \label{Model}

Our model is a system of $N$ hard disks in a unit square
$$
  D = \{0\leq x\leq 1,\ \ 0\leq y\leq 1\}
$$
that has rigid (reflecting) walls at $y=0$ and $y=1$ and periodic
boundary conditions at $x=0$ and $x=1$. The disks are identical (have
the same mass and radius), and they collide elastically with each
other.

We can represent our model as an infinite chain of copies of the square
$D$ placed in the infinite strip (channel) $I = \{0\leq y\leq 1\}$
where hard disks appear periodically; see Fig.~\ref{FigChannel}.

\begin{figure}[htb]
    \centering
   \includegraphics[width=5in]{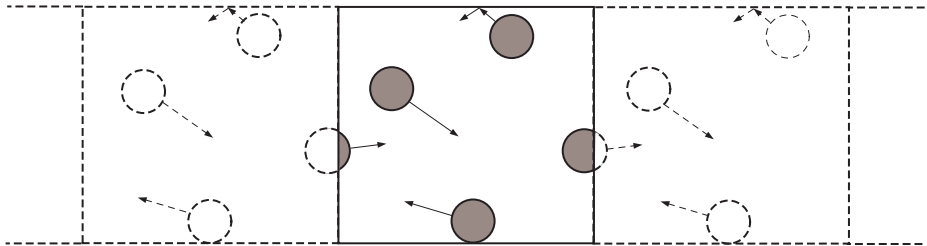}
    \caption{A periodic system of disks moving in a channel.}
    \label{FigChannel}
\end{figure}

We denote by $\bq_i = (x_i,y_i)$ the position of the $i$th disk and by
$\bp_i = (u_i,v_i)$ its velocity vector, $i=1,\ldots,N$. Now suppose a
disk collides with a wall (either $y=0$ or $y=1$). By the classical law
its velocity $\bp=\bp_{\rm before}=(u,v)$ changes to $\bp_{\rm
after}=(u,-v)$, and its kinetic energy $\tfrac 12 \|\bp\|^2$ is
preserved, i.e.,
\beq  \label{pp}
   \|\bp_{\rm before}\| = \|\bp_{\rm after}\|.
\eeq
The total momentum $\bp_{\rm total} = \bp_1+\cdots+\bp_N$ of the system
is \emph{not} preserved, but its $x$-component \emph{is} preserved,
i.e., the sum $u_1+\cdots+u_N$ remains constant in time.

The entire (macrocanonical) phase space is $4N$-dimensional, and the
energy surface is $(4N-1)$-dimensional. The conservation of
$u_1+\cdots+u_N$ allows us to set it to zero, after which the
$x$-coordinate of the center of mass will be constant and can be fixed,
too. Thus the reduced phase space is $(4N-3)$-dimensional. The resulting
system is expected to be hyperbolic (with $2N-2$ positive Lyapunov
exponents and the same number of negative ones), ergodic, and mixing.
These facts have been proved for $N=2$ in \cite{Sy99}.

We now modify the law of collisions with the walls. We will preserve
the kinetic energy of the disks, so that \eqref{pp} still holds true.
Thus it is enough to specify the angle of reflection $\psi$, as a
function of the angle of incidence, $\varphi$:
\beq \label{f}
    \psi = f(\varphi).
\eeq
We measure the angles $\psi$ and $\varphi$ as shown in
Figure~\ref{FigReflect}, so that the range of our angles is the
interval $[0,\pi]$. The function $f$ does not depend on the point of
collision, but it may be different for the bottom wall at $y=0$ and the
top wall at $y=1$. We will denote those two functions by $f_0$ and
$f_1$, respectively.

\begin{figure}[htb]
    \centering
   \includegraphics[width=3in]{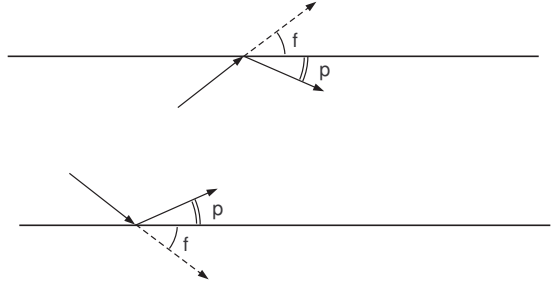}
    \caption{Angle of incidence $\varphi$ and angle of reflection $\psi$.}
    \label{FigReflect}
\end{figure}

For classical collisions $\psi=\varphi$, hence $f$ is the identity
function, $f(\varphi)=\varphi$. We will consider its small
perturbations, i.e., our functions $f$ will satisfy $f(\varphi) =
\varphi + \,{\rm small}$. For convenience we also assume that
\begin{itemize} \item[(a)] $f(\varphi)$ is continuous and strictly monotonically
increasing;
\item[(b)] $f(0)=0$ and $f(\pi)=\pi$,
\end{itemize}
i.e., $f$ is an orientation preserving homeomorphism of the interval
$[0,\pi]$. This ensures that our dynamics will be invertible, i.e.,
every phase point has a unique past.

As before, the energy surface in the phase space is
$(4N-1)$-dimensional. Since our collision rules are translation
invariant (independent of the $x$ coordinate of the collision point),
the energy surface is naturally foliated by invariant hypersurfaces on
which the dynamics are identical. Thus we can factor this foliation out
by replacing the $x$-coordinates of the disks by their \emph{relative}
$x$-coordinates $x_1-x_n$, $\ldots$, $x_{n-1}-x_n$, which eliminates
one more variable and makes the (factored) phase space
$(4N-2)$-dimensional. Most of the time we will deal with $N=2$ disks,
in which case the phase space will be 6-dimensional.

Many collision rules \eqref{f} quickly lead to trivial degenerate
regimes. For example, if $f_k(\varphi)<\varphi$ for $k=0,1$ and all
$0<\varphi<\pi$, then $u_{\rm after} = \cos\psi > \cos\varphi = u_{\rm
before}$, hence the $x$-component of the total momentum, $u_{\rm total}
= u_1+\cdots+u_N$ will grow at every collision with the walls. It is
then clear that all the particles will eventually move almost
horizontally to the right (will be ``blown away by wind'').

To prevent the wind from blowing, we will restrict our
study to collision rules where one wall
counterbalances the effect of the other:
\beq \label{ffopp}
   f_1(\varphi) = \pi - f_0(\pi-\varphi),
\eeq
i.e., the function $f_1$ at the top wall $y=1$ acts exactly opposite to
the function $f_0$ at the bottom wall $y=0$. Under this condition there
is a natural symmetry in the channel, hence drift in either direction
(left or right) cannot be dominant in the whole phase space.

\section{One particle case}  \label{SecOPC}

To clarify the effect of our twisting collisions, we
begin with the simplest case of one hard disk, i.e.,
we set $N=1$. Then the radius of the disk is
irrelevant, and we can just make it a point particle
bouncing between the two walls.

Let $\varphi_0$ be the angle of incidence at the initial collision at,
say, the bottom wall $y=0$. Then the reflection angle $\psi =
f_0(\varphi_0)$ becomes the incidence angle $\varphi_1$ at the next
collision at the top wall, i.e., $\varphi_1 = f_0(\varphi_0)$.
Similarly, the incidence angle at the following collision at the bottom
wall will be
$$
     \varphi_2 = f_1(\varphi_1) = f_1(f_0(\varphi_0)),
$$
and then the process will repeat periodically. By induction,
$$
  \varphi_{2n} = g^n(\varphi_0)
  \qquad\text{for all}\quad n\geq 1,
$$
where $g = f_1\circ f_0$, thus the evolution of incidence angles is
completely described by the iterations of the function $g$. For
functions with opposite orientations, i.e., those obeying
\eqref{ffopp}, we have
\beq \label{gfixed}
   g(\varphi)=\pi - f_0\bigl(\pi-f_0(\varphi)\bigr).
\eeq
Note that $g$, just like $f_0$, is an
orientation-preserving homeomorphism of the interval
$[0,\pi]$. But not any homeomorphism of $[0,\pi]$ can
be defined by \eqref{gfixed}, so we describe the class
of functions $g$ satisfying \eqref{gfixed}. To this
end we introduce an involution $j \colon [0,\pi]\to
[0,\pi]$ defined by $j(\varphi) = \pi-\varphi$ and
note that
$$
    g = (j\circ f_0)\circ (j\circ f_0)
$$
Thus, $g=h^2$, where $h=j\circ f_0$ is an orientation \emph{reversing}
homeomorphism of the interval $[0,\pi]$.

The map $h\colon [0,\pi]\to [0,\pi]$ obviously has a unique fixed point
$\varphi_0 \in (0,\pi)$, which automatically is a fixed point for $g$.
For any other point $\varphi \neq \varphi_0$ there are exactly three
possibilities:
\begin{itemize} \item[(a)] $h^2(\varphi)=\varphi$, then $\varphi$ is
a 2-periodic point for $h$ and a fixed point for $g$;
\item[(b)] $|h^2(\varphi)-\varphi_0|<|\varphi-\varphi_0|$, then the
    images of $\varphi$ under $g$ will move toward $\varphi_0$;
\item[(c)] $|h^2(\varphi)-\varphi_0|>|\varphi-\varphi_0|$, then the
    images of $\varphi$ under $g$ will move away from $\varphi_0$;
\end{itemize}
The non-fixed points of $g$ make an open set, which is a union of
disjoint intervals, we denote them by $\{I_m\}$. The endpoints of each
interval $I_m$ are fixed points for $g$, i.e., 2-periodic points for
$h$ (unless one of them is $\varphi_0$, of course). In each interval
$I_m\subset [0,\pi]$ all the points move under $g$ in one direction --
either toward $\varphi_0$ or away from $\varphi_0$.

For each interval $I_m$ its image $I_{m'}=h(I_m)$ is another interval
whose points move in the same direction (either toward $\varphi_0$ or
away from $\varphi_0$) as the points of $I_m$. Note that
$I_m=h(I_{m'})$ as well. We call the intervals $I_m$ and $I_{m'}$
\emph{dual} to each other. Thus the intervals $\{I_m\}$ come in
\emph{dual pairs}. Dual intervals lie on the opposite sides of
$\varphi_0$, and all the points in both intervals move either toward
$\varphi_0$ or away from $\varphi_0$. If $I_m,I_{m'}$ and $I_n,I_{n'}$
are two pairs of dual intervals, then one pair lies inside the other;
see Figure~\ref{FigInts}.

\begin{figure}[htb]
    \centering
   \includegraphics[width=4in]{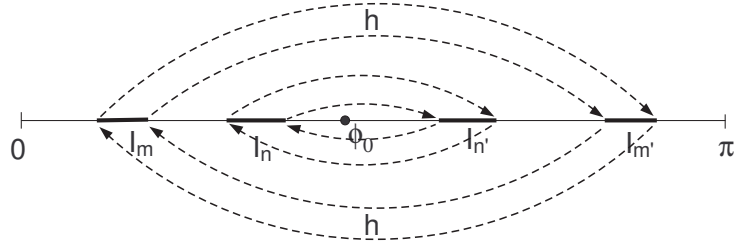}
    \caption{Dual intervals about the fixed point $\varphi_0$.}
    \label{FigInts}
\end{figure}

If the number of intervals $I_m$ is finite, then it is necessarily
even, and exactly half of them lies to the left of $\varphi_0$, and the
other half -- to the right. The above structure is essentially a
complete description of all interval maps $g$ satisfying
\eqref{gfixed}:

\begin{proposition}
Let $g$ be an orientation preserving homeomorphism of the interval
$[0,\pi]$ with an odd number of fixed points
$$
    0=\varphi_{-k}<\varphi_{-k+1}<\cdots<\varphi_0<\cdots
    \varphi_{k-1}<\varphi_{k}=\pi.
$$
Suppose that for each $0\leq i<k$ either all the points of both
intervals $(\varphi_{-i-1},\varphi_{-i})$ and
$(\varphi_{i},\varphi_{i+1})$ move under $g$ toward $\varphi_0$ or all
the points in these two intervals move away from $\varphi_0$. Then
there exists an orientation \emph{reversing} homeomorphism $h \colon
[0,\pi] \to [0,\pi]$ such that $g=h^2$. The middle fixed point
$\varphi_0$ is the (only) fixed point of $h$.
\end{proposition}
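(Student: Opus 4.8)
The plan is to build $h$ piece by piece over the intervals cut out by the fixed points of $g$, exploiting the fact that any admissible $h$ is heavily constrained on the fixed-point set. First I would note that since $g=h^2$ commutes with $h$, the homeomorphism $h$ must permute the fixed points of $g$; being orientation reversing it reverses their order, and an orientation-reversing homeomorphism of $[0,\pi]$ has exactly one fixed point, which is therefore forced to be the middle one $\varphi_0$. Hence we must have $h(\varphi_i)=\varphi_{-i}$ for every $i$, so $h$ is obliged to carry each open interval $A_i:=(\varphi_i,\varphi_{i+1})$ (for $i\ge 0$) onto its ``dual'' $B_i:=(\varphi_{-i-1},\varphi_{-i})$, and vice versa. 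This reduces the task to constructing, separately on each dual pair $\{A_i,B_i\}$, an order-reversing pair of homeomorphisms $A_i\to B_i$ and $B_i\to A_i$ whose composition in both orders reproduces $g$.

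Fix one pair and write $A=A_i$, $B=B_i$. The hypothesis says the points of $A$ and of $B$ move under $g$ in the same sense relative to $\varphi_0$. Since $A$ lies to the right of $\varphi_0$ and $B$ to its left, this means $g$ moves them in opposite absolute directions: say $g(x)<x$ on $A$ and $g(x)>x$ on $B$ (the opposite case is symmetric). The key step is to produce an order-reversing homeomorphism $\alpha\colon A\to B$ conjugating $g|_A$ to $g|_B$, i.e. $\alpha\circ g|_A=g|_B\circ\alpha$. To do this I would use the standard fundamental-domain construction: choose base points $a_0\in A$, $b_0\in B$, set $a_n=g^n(a_0)$ and $b_n=g^n(b_0)$, so that $(a_n)$ decreases to the left endpoint $\varphi_i$ of $A$ while $(b_n)$ increases to the right endpoint $\varphi_{-i}$ of $B$. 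Prescribing $\alpha(a_n)=b_n$, interpolating by an arbitrary decreasing homeomorphism on one fundamental interval $[a_1,a_0]\to[b_0,b_1]$, and extending by the relation $\alpha\circ g=g\circ\alpha$, yields a decreasing homeomorphism of closures with $\alpha(\varphi_i)=\varphi_{-i}$ that conjugates $g|_A$ to $g|_B$. The monotonicity matches precisely because the two orbit sequences run in opposite directions, which is exactly the content of the direction hypothesis; had the two intervals moved in opposite relative senses, only an order-preserving conjugacy would exist and the construction would fail. I expect this step to be the crux of the argument.

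With $\alpha$ in hand I would set $h|_A=\alpha$ and $h|_B=g|_A\circ\alpha^{-1}$. A direct check gives $h^2|_A=(g|_A\circ\alpha^{-1})\circ\alpha=g|_A$ and, using the conjugacy, $h^2|_B=\alpha\circ(g|_A\circ\alpha^{-1})=(g|_B\circ\alpha)\circ\alpha^{-1}=g|_B$; moreover $h|_B$ is decreasing, being a composition of the decreasing map $\alpha^{-1}$ with the increasing map $g|_A$. For the central pair $A_0=(\varphi_0,\varphi_1)$, $B_0=(\varphi_{-1},\varphi_0)$ the same recipe gives $h(\varphi_0)=\varphi_0$, so $\varphi_0$ becomes a fixed point of $h$.

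Finally I would glue the pieces. They agree at every shared endpoint because the value $h(\varphi_i)=\varphi_{-i}$ is the same whether computed from the interval to its left or to its right, so $h$ is a well-defined continuous self-map of $[0,\pi]$; inspecting the one-sided behaviour at each $\varphi_i$ shows $h$ is strictly decreasing across it, hence $h$ is a global orientation-reversing homeomorphism with $h(0)=\pi$, $h(\pi)=0$ and $h^2=g$. Since $h(x)-x$ is then strictly decreasing from $\pi$ to $-\pi$, $h$ has a single fixed point, which we have already exhibited as $\varphi_0$; thus $\varphi_0$ is the only one. With the conjugacy construction of $\alpha$ settled, the gluing and the uniqueness of the fixed point are routine.
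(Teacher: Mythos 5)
Your proof is correct: the pairing of dual intervals $A_i \leftrightarrow B_i$, the fundamental-domain construction of the order-reversing conjugacy $\alpha$ (which is indeed the crux, and you correctly identify that the direction hypothesis is exactly what makes the two orbit sequences run in opposite senses so that a \emph{decreasing} conjugacy exists), the definitions $h|_A=\alpha$ and $h|_B=g|_A\circ\alpha^{-1}$, the gluing at the fixed points, and the monotonicity argument for uniqueness of the fixed point all check out. There is nothing to compare against in detail, because the paper omits its proof entirely, saying only that it ``uses standard methods of one-dimensional topological dynamics''; your fundamental-domain argument is precisely such a standard method, so your write-up supplies, correctly, the details the authors chose to leave out.
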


The proof uses standard methods of one-dimensional topological
dynamics, and we omit it.

The central fixed point $\varphi_0$ plays a special role, we will call
it \emph{the center} (of the map $g$); it is the only fixed point of
$h$. Note that
\beq  \label{fff}
    f_0(\varphi_0) = \pi-\varphi_0,
\eeq
hence if a particle hits a wall at the angle $\varphi_0$, it turns
around and flies straight back. Its trajectory is then periodic not
only in the angular coordinates, but also in the spatial coordinates;
see Fig.~\ref{FigBack}.

\begin{figure}[htb]
    \centering
   \includegraphics[width=2in]{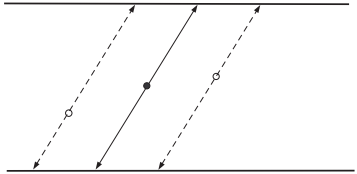}
    \caption{A periodic trajectory running between two collisions.}
    \label{FigBack}
\end{figure}

\section{A special family of collisions with a twist}

While most of our conclusions apply to generic functions $f_0$ and
$f_1$ satisfying \eqref{ffopp}, it will be convenient to use one
particular family of functions to clarify our arguments.

First, it is convenient when the center $\varphi_0$ is at the geometric
center of the interval $[0,\pi]$, i.e., $\varphi_0=\pi/2$. Then the
special periodic trajectories described in the end of
Section~\ref{SecOPC} move vertically, up and down, i.e., they just
bounce between the walls like the regular periodic billiard
trajectories.

Next, for simplicity we consider functions $g$ that
have one fixed point $\varphi_0 = \pi/2$ (other than
$0$ and $\pi$), which will automatically be the
center. Then there are three fixed points total:
$0,\pi/2$, and $\pi$. The intervals $(0,\pi/2)$ and
$(\pi/2,\pi)$ either both move toward $\pi/2$ or both
move away from $\pi/2$. In the former case $\pi/2$
will be a stable fixed point and attract the entire
interval $(0,\pi)$. In the latter case $\pi/2$ will be
an unstable (repulsive) fixed point and then each
interval $(0,\pi/2)$ and $(\pi/2,\pi)$ will be
attracted by its other endpoint, i.e., by $0$ or
$\pi$, respectively. We will say that in the former
case the map $g$ has a \emph{stable center} and in the
latter -- an \emph{unstable center}. These cases are
quite different, in dynamical terms, and both are
interesting.

We note that in order to make $\pi/2$ a fixed point for $g$ satisfying
\eqref{gfixed}, we need to set $f_0(\pi/2)=\pi/2$, according to
\eqref{fff}. Then $\pi/2$ is a fixed point for $f_0$ as well.

Thus $f_0$ must have three fixed points: $0$, $\pi/2$, and $\pi$. It is
also convenient if the time reversal of the collision rule defined by
$f_0$ belongs to the same family of collision rules. The time reversal
collision rule is defined by $\varphi\mapsto f_0^-(\varphi)$ at the
bottom wall and $\varphi \mapsto f^-_1(\varphi)$ at the top wall, where
the functions $f_0^-$ and $f_1^-$ satisfy
\beq \label{trrules}
    f_0^- = f^{-1}_1
    \qquad\text{and}\qquad
    f_1^- = f^{-1}_0
\eeq
in accordance with \eqref{ffopp}.

In view of the above requirement we can define $f_0$ by
\beq  \label{f0cot}
     \tan f_0(\varphi) = e^\lambda \tan \varphi.
\eeq
Then, due to \eqref{ffopp},
\beq  \label{f1cot}
     f_1(\varphi) = f_0(\varphi)
\eeq
so both walls obey the same collision rule! Note that there are no
restriction on $\lambda$ here, our rules work well for any $\lambda \in
(-\infty, \infty)$, but for the twist to be small we will only consider
$\lambda \approx 0$.

Due to \eqref{trrules}, the time reversal collision rules satisfy
$$
    \tan f_0^-(\varphi) = \tan f_1^-(\varphi) = e^{-\lambda} \tan \varphi,
$$
hence it belongs to the same family of rules \eqref{f0cot}, but
$\lambda$ must be replaced with $-\lambda$, i.e., reversing the time
corresponds to negating $\lambda$.

A direct differentiation of \eqref{f0cot} gives
$$
   f_0'(\varphi) = e^{-\lambda}\,\frac{\sin^2f_0(\varphi)}{\sin^2\varphi}
   =e^{\lambda}\,\frac{\cos^2f_0(\varphi)}{\cos^2\varphi},
$$
Hence
$$
   f_0'(\pi/2) = e^{-\lambda}
    \qquad\text{and}\qquad
   f_0'(0)=f_0'(\pi) = e^{\lambda}.
$$
Thus, $\lambda>0$ corresponds to a stable center and $\lambda<0$ to an
unstable center, i.e., we have dynamics of both types.

We note that the rules \eqref{f0cot}--\eqref{f1cot} can be rewritten as
follows: at every collision with the wall the incoming velocity
$(u^-,v^-)$ and the outgoing velocity $(u^+,v^+)$ are related by
\beq  \label{vvvv}
         u^+/|v^+| = e^{-\lambda} u^-/|v^-|.
\eeq
This makes it convenient for numerical simulations: one can recompute
the velocity vectors by \eqref{vvvv} without using the angle $\varphi$
or its tangent.

\section{Collisions with a stable center}

We begin with a single disk and a small positive $\lambda
>0$. Again, we assume that the disk has zero radius, i.e., it
is just a point particle.

Recall that $f_0'(\pi/2)=e^{-\lambda}<1$. Thus if the angle of
incidence is close to $\pi/2$, i.e., $\varphi = \pi/2 + \delta$ for
some small $\delta$, then after $n$ collisions at the walls it will be
$\varphi_n = \pi/2 + \delta_n$, where $\delta_n\to 0$.

More precisely, as the disk moves between collisions from wall to wall,
its displacement in the horizontal direction is given by $\Delta x =
\cot \varphi = \tan \delta$. Due to \eqref{f0cot}--\eqref{f1cot}, after
the next collision its displacement will be $e^{-\lambda} \cot\varphi =
e^{-\lambda}\, \Delta x$. Thus the displacement in the $x$ direction is
precisely decreasing by a factor $e^{-\lambda}<1$.

Thus the displacements in the $x$ direction make a geometric
progression and the total displacement is
\beq \label{sumx}
   \Delta x \sum_{n=0}^{\infty}e^{-n\lambda}  = \frac{\Delta x}{1-e^{-\lambda}}
\eeq
The particle's trajectory converges to a vertical line and the velocity
vector aligns vertically at an exponential rate.

Consider now the system of $N$ disks. We assume that the diameter $d$
of the disks is small enough so that the disks can be lined up along
one vertical or horizontal line in the unit square, i.e., $d<1/N$.

For any disk with center at $(x_i,y_i)$ and velocity vector $(u_i,v_i)$
denote by $s_i$ the total displacement in the $x$ direction of that
disk if it is allowed to bounce between the walls alone, as if the
other disks did not exist. By \eqref{sumx}, $s_i$ is finite, and in
fact
\beq  \label{Si}
   s_i \sim \frac{|u|}{|v|(1-e^{-\lambda})}
\eeq

Let $U$ be the the subset in phase space satisfying the following
condition:
\beq \label{U}
    |x_i-x_j|>d+s_i+s_j
    \qquad \forall i\neq j
\eeq
(note that the $x$ coordinates must be taken modulo 1, as we have
periodic boundary conditions at $x=0$ and $x=1$).

Now \eqref{U} guarantees that the projections of the disks onto the
$x$-axis do not overlap and will not overlap at any time in the future.
Thus our $N$ disks will never collide with each other, all of them will
be bouncing between the walls and the trajectory of each disk will
converge to some vertical line.

\begin{figure}[htb]
    \centering
   \includegraphics[width=3in]{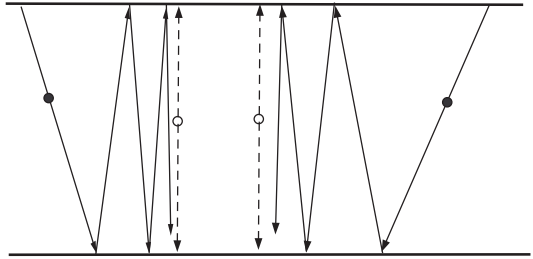}
    \caption{Convergence the a stable limit regime.}
    \label{FigStabN}
\end{figure}

In this limit regime, the $x$-component of the velocity vector of each
disk is zero. Thus those limit trajectories form a family of
codimension $N$ in phase space. Note that the phase space of our system
is $(4N-1)$-dimensional (the only constraint is the conservation of the
total kinetic energy). Thus our family of stable limit trajectories is
a $(3N-1)$-dimensional submanifold in the phase space. We denote it by
$S$.

Every phase trajectory starting in $U$ converges to $S$ at an
exponential rate. The set $U$ is invariant, i.e., all the trajectories
originating in $U$ stay there in the future. Note that $U$ is an open
subset of the phase space, thus it has a positive Lebesgue measure.
Therefore the limit set $S$ has an open basin of attraction of positive
Lebesgue measure.

It is more convenient to work with the collision space $\cM$ which
consists of all phase states such that either some two disks collide or
a disk collides with a wall. (At the moment of collision, the velocity
vectors change discontinuously, and it is customary to include in $\cM$
only the postcollisional velocity vectors). The induced map
$T=T_{\lambda} \colon \cM\to \cM$ is called the collision map.

We denote by $\nu_0$ the normalized measure on $\cM$ that is invariant
under the map $T_0$ corresponding to $\lambda=0$. This map corresponds
to the classical specular reflections at the walls (where
$\psi=\varphi$), so $T_0$ is the collision map for the classical gas of
hard disks at equilibrium. The measure $\nu_0$ is absolutely continuous
with respect to the Lebesgue measure on $\cM$, and it has a strictly
positive density. At the collisions with the walls, the density is
proportional to $\sin \varphi$ \cite{Ch97}.

Let $U_0 = U\cap \cM$ denote the subset of the collision space where
\eqref{U} holds. It is invariant under $T$, i.e., $T(U_0) \subset U_0$.
Its trajectories converge to the set $S_0 = S\cap \cM$, i.e., $U_0$ is
the basin of attraction of $S_0$ under the map $T$. The basin of
attraction $U_0$ may be regarded as a trap, or a ``hole'' -- every
phase trajectory that enters it will never come back.

\section{Holes and escape rates} \label{Holes}

Dynamical systems with holes have been studied extensively in the past
decades, both numerically and theoretically \cite{BBS,CMT,KL,PY,STN}. In
particular, many researchers studied billiards with holes
\cite{BLD,BKT,DWY,LM,OT}. It is generally observed (and in many cases
proved) that holes attract almost the entire phase space, i.e., almost
every trajectory sooner or later enters the hole and never returns (in
other words, it escapes). This phenomenon is also interpreted as the
leakage of mass (phase volume) through holes so that the remaining phase
space gets thinner.

The holes are usually characterized by the \emph{escape rate} $\rho>0$,
which basically says that the fraction of phase space that has
\emph{not} escaped through the holes (not leaked out) by the time $n>0$
(where $n$ is discrete time, i.e., the collision counter) decays
exponentially at a rate $\rho$, i.e., it is of order $e^{-\rho n}$.
More precisely, if $\cM_n$ denotes the subset of the phase space $\cM$
that has not escaped through the holes by the time $n$, then
$$
   \lim_{n\to\infty} \tfrac 1n\, \ln{\rm Leb}(\cM_n) = -\rho.
$$
For any phase state $X\in \cM$ we denote by $\tau(X)$ the \emph{escape
time}, i.e., the time the trajectory of $X$ enters the hole. Then
$\tau(X)$ has an approximate exponential distribution on $\cM$ with
parameter $\rho$ (with respect to the Lebesgue measure), and its
average is $1/\rho$.

Many studies investigate how the escape through holes is affected by
the size of the latter. When holes get smaller (shrink), the escape
rate decreases and the escape time grows. In our case, the ``hole''
$U_0$ described above depends on the parameter $\lambda>0$. When
$\lambda$ decreases, the hole $U_0$ shrinks and in the limit
$\lambda\to 0$ the hole converges to the submanifold $S_0 \subset \cM$
of zero volume.

We have investigated the escape process in our model by numerical
simulations. We used $N=2$ disks of diameter $d=0.1$ with the collision
rules at the walls defined by \eqref{f0cot}--\eqref{f1cot} for various
small $\lambda>0$. We have estimated the ``escape time'' numerically,
for different $\lambda$'s. For each $\lambda$ we simulated the dynamics
from $10^5$ randomly chosen initial states, stopping the simulations
whenever the system escaped into the ``hole'' $U_0$, and computed the
mean escape time $\mu_{\tau}$.

\begin{figure}[htb]
    \centering
   \includegraphics[width=4in]{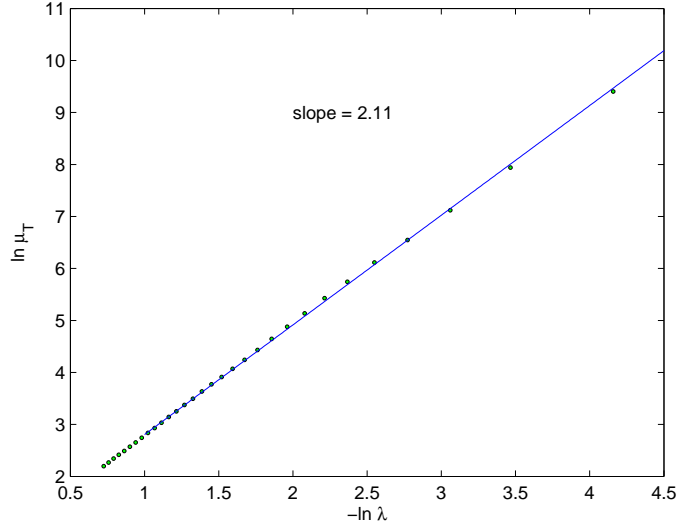}
    \caption{Plot of $\ln\mu_\tau$ versus $-\ln \lambda$. The
    least squares line has an estimated slope of 2.11.}
    \label{Figloglog}
\end{figure}

It naturally grows, as $\lambda$ gets smaller, and Figure~\ref{Figloglog}
shows how its grows on the log-log scale. The plot clearly demonstrates a
linear pattern suggesting that $\mu_\tau \sim \lambda^{-b}$ for some
$b>0$. The least squares fitting line in Figure~\ref{Figloglog} has slope
2.11, so we may guess that $b \approx 2$. Below we give a heuristic
argument supporting (and refining) this conjecture.

Recall that the ``hole'' $U_0\subset \cM$ is an open set of positive
Lebesgue measure, and it is invariant under the collision map $T$,
i.e., $T(U_0) \subset U_0$. Hence the sets
$$
   U_1=T^{-1}(U_0)\setminus U_0
   \qquad\text{and}\qquad
   U_k = T^{-k+1}(U_1)
$$
for $k=2,3,\ldots$ are disjoint and each of them has
positive Lebesgue measure, too. Their union
$U_{\infty} = \cup_{k=0}^{\infty} U_{k}$ contains all
the phase points $X\in \cM$ that end up in the hole
$U_0$ eventually. Our first conjecture (well supported
by our numerical observations) is that almost every
phase point $X\in \cM$ eventually ends up in the hole,
i.e., $U_{\infty}$ covers the entire collision space
$\cM$ (up to a null set), i.e., $\cM = U_{\infty}$
(mod 0). By direct analysis (we omit details)
\beq  \label{lambdaN}
    {\rm Leb}(U_0) = \cO(\lambda^N)
    \qquad\text{and}\qquad
   {\rm Leb}(U_{1}) = \cO(\lambda^{N+1}).
\eeq
and similar estimates hold for $\nu_0(U_0)$ and $\nu_0(U_1)$.

Next, the measure $\nu_0$ is invariant under the original, twist-free
collision map $T_0$, but not for $\lambda\neq 0$. In fact, it is
preserved by the interparticle collisions, but at collisions with the
walls it gets compressed if $f_0'(\varphi)<1$ and gets expanded if
$f_0'(\varphi)>1$. In our case, $f_0'(\varphi)<1$ whenever $c_1 <
\varphi < c_2$ for some constants $0< c_1 < c_2 < \pi$ whose values are
not essential.

For any point $X\in \cM$ we denote the Jacobian of the inverse
collision map $T^{-1}$, with respect to the canonical measure $\nu_0$
by $J(X)=e^{\gamma(X)}$. If $X$ corresponds to an inter-particle
collision, then $J(X)=1$, hence $\gamma(X)=0$. Otherwise $\gamma$ has a
small (of order $\lambda$) value.

Now for every point $X \in U_1$ and $k\geq 1$ we denote
$$
    J_k(X) = J(T^{-k+1}(X)) = e^{\gamma_k}, \qquad
    \gamma_k = \gamma(T^{-k+1}(X)),
$$
then we have
\beq \label{JJJ}
  \nu_0(U_{k+1}) = \int_{U_{1}} J_1\cdots J_k\,d\nu_0
   = \int_{U_{1}} e^{\gamma_1+\cdots+\gamma_k}\,d\nu_0.
\eeq

Next note that the sequence $\gamma_1,\ldots,\gamma_k$ is determined by
the collisions of the disks with the walls along the \emph{past}
trajectories of points $X\in U_{1}$. As time runs backwards, the disks
begin colliding with each other and a chaotic regime quickly sets in.
Then the distribution of every trajectory in the phase space will be
fairly close to uniform (equilibrium). Thus the values of $\gamma_n$
could be treated as nearly independent random variables. They are all
of order $\lambda$, hence the sum of $k$ of them can be expected to
grow as $\lambda\sqrt{k}$, in the spirit of the the Central Limit
Theorem. Thus by \eqref{JJJ} we can expect that
$$
  \nu_0(U_{k+1})
   \sim \int_{U_{1}} e^{\lambda\sqrt{k}}\,dX
   \sim \lambda^{N+1} e^{\lambda\sqrt{k}},
$$
as long as the chaotic regime continues. The obvious limitation $\nu_0
(U_{k+1}) \leq 1$ gives us an upper bound for $k$:
\beq   \label{kupper}
       k \leq \,{\rm const}\,\frac{(\ln\lambda)^2}{\lambda^2}
\eeq
Perhaps it is more reasonable to expect that $\nu_0 (U_{k+1}) \sim
1/k$, but this would give us pretty much the same upper bound
\eqref{kupper} on $k$.

Thus we see that the chaotic regime, when the particles collide and the
measure of the regions $U_{k}$ tends to grow, lasts for about
$\cO\bigl((\ln\lambda)^2/\lambda^2\bigr)$ collisions. As a result, the
mean `escape time' is of the same order:
\beq \label{tescape}
      \mu_\tau = \cO\bigl((\ln\lambda)^2/\lambda^2\bigr).
\eeq
On the log-log scale $x=-\ln\lambda$ and $y=\ln\mu_\tau$ adopted in
Figure~\ref{Figloglog} this means
\beq  \label{22}
   y = 2x + 2\ln x +\,{\rm const}.
\eeq
Accordingly, we used a functional relation $y=ax+b\ln x+c$ to
approximate our data plotted in Figure~\ref{Figloglog}, and the least
squares fit gives
$$
   y = 1.985x + 0.233\ln x + 0.811.
$$
The estimated slope of 1.985 is in a good agreement with the
theoretically predicted slope of 2. The logarithmic coefficient 0.233
is not close to 2 in \eqref{22}, but it is of secondary importance for
the estimate \eqref{tescape}.

We now investigate the dynamics beyond the bound \eqref{kupper}. The
measure $\mu_0 (U_{k})$ cannot increase with $k$ forever; in fact we
must have
$$
   \mu_0(U_{k}) \to 0\qquad \text{as}\quad k\to\infty,
$$
and moreover, $\mu_0(U_{k})$ should decay fast because the series
$\sum_k \mu_0(U_{k})$ is summable.

Thus for large $k$ (those far exceeding the upper bound \eqref{kupper})
the dynamics cannot be chaotic: it is dominated by the collisions with the
walls that compress the phase volume (when the time runs backwards). Those
collisions have incidence angles close to 0 or $\pi$, so that the
particles hit the walls nearly tangentially. This means that the particles
move nearly horizontally in the channel.

We recall that running the time backward for a dynamics with parameter
$\lambda$ is equivalent to reversing the velocity vectors of the disks
and running the time forward for the dynamics with parameter
$-\lambda$.

This suggests that for the dynamics with negative parameter $\lambda<0$
the limiting regime also exists, and it involves the disks flying
almost horizontally and experiencing nearly tangential collisions with
the walls. This will be established, with some rigor, in the next
sections.

\section{Collisions with an unstable center} \label{CUS}

Here we analyze the dynamics of $N$ disks with a negative parameter,
$\lambda<0$. In this case $\varphi = 0$ and $\varphi=\pi$ are stable
fixed points for both functions $f_0$ and $f_1$ and the derivative at
these points is
$$
   f_0'(0)=f_0'(\pi)=e^\lambda<1
$$
Thus if the angle of incidence is close to 0 or $\pi$, i.e., $\varphi =
\delta$ or $\varphi=\pi-\delta$ for some small $\delta$, then after a
collision at the wall it will be even closer to 0 or $\pi$, i.e., it
will be $\varphi_1=\delta_1$ or $\varphi_1=\pi-\delta_1$, respectively,
with $\delta_1 < \delta e^{\lambda'}$ with some constant $\lambda'<0$
($\lambda' \approx \lambda$ for small $\delta$).

However the angles of incidence cannot converge to 0 or $\pi$
monotonically in a manner similar to their convergence to $\pi/2$ in
the previous section. Indeed, after a nearly tangential collision with
the wall the disk has to move across the channel to the opposite wall,
and now on its way across it will be very likely colliding with other
disks.

\begin{lemma}  \label{lmccc}
With probability one the disks will continue colliding at arbitrary
distant future.
\end{lemma}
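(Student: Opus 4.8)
The plan is to argue by contradiction: I will show that the set of initial states whose disks collide with one another only finitely often is a null set, so that with probability one interparticle collisions recur forever. Since the reference measure $\nu_0$ has a strictly positive density, ``probability one'' may be read with respect to either $\nu_0$ or Lebesgue measure. By invariance of the dynamics it suffices to prove that the set $W$ of phase points whose forward trajectory contains \emph{no} interparticle collision at all has measure zero; the set of points that stop colliding after some finite time is then $\bigcup_{n\ge 0} T^{-n}(W)$, a countable union of null sets.

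First I would exploit the decoupling on $W$. For a point of $W$ no two disks ever meet, so each disk bounces between the two walls in isolation and its incidence angle evolves under the one-particle map $g=f_1\circ f_0$ of Section~\ref{SecOPC}. Because $\lambda<0$ gives an \emph{unstable} center, $\pi/2$ is repelling while $0$ and $\pi$ are attracting; hence for every disk whose initial angle avoids the repelling point $\pi/2$ and the (at most countable) set of periodic angles, the incidence angle converges to $0$ or to $\pi$. Consequently $|v_i|\to 0$ while $\|\bp_i\|$ stays fixed (wall collisions preserve speed by \eqref{pp} and there are no interparticle collisions on $W$), so each disk moves ever more horizontally and its per-crossing horizontal displacement, the analogue of the geometric sum \eqref{sumx}, grows without bound. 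The initial angles lying in the exceptional set form a null subset of $W$, which I discard.

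The crux is to turn near-horizontal motion into a forced collision. I would first rule out a rigid common drift: on $W$ each speed $\|\bp_i\|$ is conserved, so in the limit $u_i\to\pm\|\bp_i\|$, and the constraint $u_1+\cdots+u_N=0$ (which we have set to zero) cannot hold with all the nonzero $u_i$ of one sign. Thus at least two disks, say $i$ and $j$, drift horizontally in opposite directions, with $|u_i-u_j|$ bounded below by a positive constant. A disk whose per-crossing displacement exceeds $1/d$ wraps around the periodic $x$-direction many times during a single wall-to-wall crossing, so its center sweeps within distance $d$ of every vertical level it occupies; meanwhile, because $|v_i|,|v_j|\to 0$, the time intervals on which $|y_i-y_j|<d$ grow unboundedly long, eventually exceeding the bounded period with which the relative coordinate $x_i-x_j$ sweeps the whole circle. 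On such an interval $x_i-x_j$ passes through $0$ while $|y_i-y_j|<d$, bringing the two disks within one diameter of each other -- a collision. This contradicts the defining property of $W$.

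The main obstacle is precisely this geometric--timing step: one must show that the slow vertical drift of two near-horizontal disks and the fast relative horizontal winding are synchronized often enough to guarantee approach to within distance $d$, uniformly once the angles are close enough to $0$ or $\pi$. The saving feature is the separation of scales produced by $\lambda<0$: as the angles tend to $0,\pi$ the vertical speeds tend to $0$, so the near-level windows lengthen without bound while the horizontal winding period stays $\cO(1)$, which makes the synchronization automatic for all sufficiently small $|v_i|$. Granting this, $W$ is a null set, every $T^{-n}(W)$ is null, and almost every trajectory keeps colliding forever, as claimed.
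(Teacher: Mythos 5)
Your overall scheme --- reduce to showing that the set $W$ of phase points whose forward orbit contains no interparticle collision is null, then force a collision by a winding argument whenever two disks have different limiting horizontal velocities --- is reasonable, and the winding step is essentially the paper's own ``simple geometry'' observation. The fatal flaw is the step where you ``rule out a rigid common drift'': it rests on the conservation law $u_1+\cdots+u_N=0$, which is simply false for the twisted dynamics. The $x$-component of total momentum is conserved only for the classical reflections ($\lambda=0$); under the rules \eqref{f0cot}--\eqref{f1cot} with $\lambda<0$, every wall collision strictly increases $|u_i|$ (see \eqref{vvvv}, where $e^{-\lambda}>1$), and Section~\ref{CUS} --- the very section containing Lemma~\ref{lmccc} --- shows that $M_u=u_1+\cdots+u_N$ grows monotonically on $W_+$ and that typical trajectories are driven into regions $W_{\pm}$ where \emph{all} disks drift in the same horizontal direction. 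So nothing prevents all the limiting horizontal velocities from having one sign, or indeed from being equal; your appeal to a vanishing total momentum excludes a scenario that actually occurs.

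The case your argument leaves open --- all the $u_i$ converging to one common limit --- is precisely the delicate one, and it is where the paper's key idea enters. On $W$ each disk undergoes only wall collisions, which preserve its speed; hence a common limit of the velocity vectors forces all the speeds $\|\bp_i\|$ to be equal, and those speeds have been constant since the last interparticle collision (or since time zero). Thus $W$, and more generally the set of orbits with only finitely many interparticle collisions, is contained in a countable union of sets cut out by the codimension-$(N-1)$ condition ``equal speeds right after a collision,'' hence is null. Your winding argument does cover the complementary case of two distinct limiting horizontal velocities, so the proposal could be repaired by replacing the momentum-conservation step with this equal-speeds observation; as written, however, it substitutes for the essential measure-zero argument a conservation law that the perturbation destroys.
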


\begin{proof}
Suppose that the disks never collide with each other after some time
$T>0$. Then they just collide with the walls, and hence their velocity
vectors align almost horizontally and converge to some horizontal
vectors.

By simple geometry, the disks can avoid each other on their way from
one wall to the other only if the horizontal components of their
velocities are nearly equal. And as the vertical components get
smaller, the horizontal components would have to get also closer to
each other. So in the limit, as time grows to infinity, the velocity
vectors of the disks would have to converge to a common limit!

However, the collisions with the walls do not alter the kinetic energy
of the disks. Thus if the disks somehow managed to avoid each other,
the speed of each disk would remain constant. And since their velocity
vectors must have a common limit, their speeds must have been equal all
the time! In other words, the `equal speeds' situation should have
occurred right after the last collision between the disks in the past.
This event is exceptional and occurs with probability zero.
\end{proof}

Thus almost surely our disks will keep colliding with each other
forever. For this reason the limiting stable regime (if one exists)
could not be as simple as the one we have seen in the previous section.
Still a limiting stable regime exists, as we will show next. Again we
assume that the disks are not too large.

Let us set the total kinetic energy to $N/2$, so that the velocity
vectors $(u_i,v_i)$ of the disks will satisfy
\beq  \label{Etot}
   u_1^2+v_1^2+\cdots+u_N^2+v_N^2 = N.
\eeq
Now we define a subset $W_+$ in phase space by
\beq  \label{W+}
       W_+=\{u_1+\cdots+u_N>\sqrt{N(N-1)} \}.
\eeq
An elementary calculation shows that for every $X\in W_+$
\beq  \label{u>0}
     u_i>0 \qquad \text{for all} \quad i=1,\ldots,N.
\eeq
Now consider the trajectory of a phase point $X\in W_+$. At every
collision with a wall, the incoming velocity vector $(u_i,v_i)$ points
to the right, due to \eqref{u>0}. Then the postcollisional velocity
vector $(u_i',v_i')$ will turn toward the wall, i.e., $u_i'>u_i$ and
$|v_i'|<|v_i|$. At every collision between disks $i$ and $j$, the
velocity vectors of both disks may change, but their total momentum is
preserved, i.e., $u_i+u_j$ remains the same.

We see that the total horizontal momentum of the system
\beq  \label{Mu}
       M_u = u_1+\cdots+u_N
\eeq
is monotonically increasing: it is preserved at every interparticle
collision and increases at every collision with a wall. Thus the
trajectory of every point $X\in W_+$ will remain in $W_+$ forever.
Furthermore, the value of $M_u$ will grow and converge to a limit.

\begin{figure}[htb]
    \centering
   \includegraphics[width=4in]{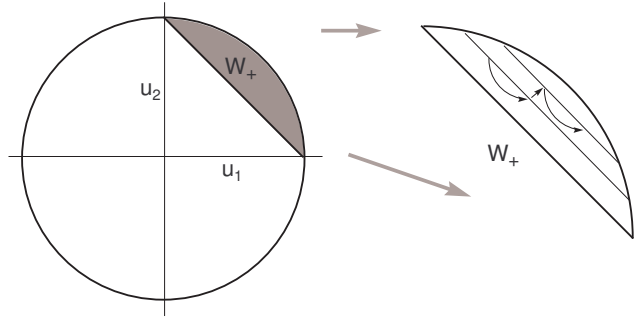}
    \caption{The region $W_+$ and the dynamics inside it.}
    \label{Figuu}
\end{figure}

We illustrate this process for $N=2$. Figure~\ref{Figuu} shows $W_+$ in
the $u_1,u_2$ coordinates: $W_+$ is bounded by the circular arc
$u_1^2+u_2^2=2$, $u_1,u_2 > 0$, and its chord $u_1+u_2=\sqrt{2}$. Then
$W_+$ is foliated by parallel lines $u_1+u_2=c$, $\sqrt{2}<c\leq 2$.
The point $(u_1,u_2)$ stays on the same line $u_1+u_2=c$ at every
collision between the disks and moves up to another line $u_1+u_2=c'$
with $c'>c$ during every collision with a wall.

Next we investigate the limit regime. Clearly there is a limit chord
$u_1+u_2=c_{\infty}\leq 2$. If $c_{\infty}=2$, the chord degenerates to
a single point $u_1=u_2=1$. In that case $v_1=v_2=0$, so the limit
regime consists of both disks moving horizontally with the same unit
speed.

If $c_{\infty}<2$, then the points $(u_1,u_2)$ along
the trajectory of $X\in W_+$ accumulate on the chord
$u_1+u_2=c_{\infty}$. We claim that they actually
converge to one of the end points of that chord.
Indeed, when the point $(u_1,u_2)$ is at a distance
$\delta_1>0$ from the end points of the current chord
$u_1+u_2=c$, then $u_1^2+u_2^2<2-\delta_2$ for some
$\delta_2>0$ (which depends on $\delta_1$). Therefore
$|v_1|>\delta_3$ or $|v_2|>\delta_3$ for some
$\delta_3>0$ (which depends on $\delta_2$). Now we
claim that at the next collision with a wall by one of
the disks its vertical velocity will be
$|v_i|>\delta_4$ for some $\delta_4>0$ (which depends
on $\delta_3$). Indeed, a sequence of consecutive
collisions between the two disks can be reduced to a
simple billiard trajectory in a 2D periodic Lorentz
gas \cite[Section~4.2]{CM06}, and then one can easily
check that the vertical components of their velocities
cannot both vanish at the time they hit the walls.

Next, when a disk with a vertical velocity
$|v_i|>\delta_4$ hits a wall, its velocity vector will
be rotated so that $u_i$ will grow by some
$\delta_5>0$ (which depends on $\delta_4$). Thus the
point $(u_1,u_2)$ will move up to a chord
$u_1+u_2=c+\delta_6$ for some $\delta_6>0$ (which
depends on $\delta_5$). Hence the convergence to a
limit chord can only occur when $(u_1,u_2)$ converges
to one of its end points (and, as a result, $v_1,v_2$
converge to zero).

In the above limit regime, when $c_{\infty}<2$, both disks move
horizontally but at different speeds, $\bar{u}_1\neq \bar{u}_2$. We
believe this actually happens with probability zero, but we can only
give a heuristic argument. By Lemma~\ref{lmccc} the disks have to
collide from time to time. When they collide, their relative horizontal
velocity is $u_1-u_2 \approx \bar{u}_1-\bar{u}_2\neq 0$, but their
relative vertical velocity is $v_1-v_2= \varepsilon \approx 0$. The
collision of two disks can be reduced to a 2D periodic Lorentz gas with
infinite horizon \cite[Sect. 4.2]{CM06}.

\begin{figure}[htb]
    \centering
   \includegraphics[width=4in]{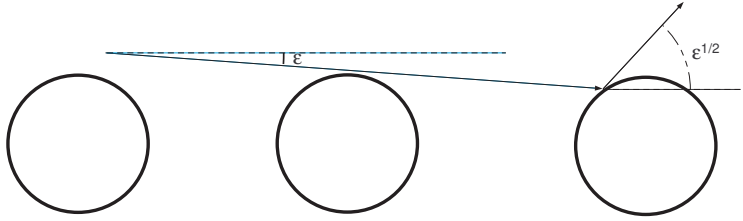}
    \caption{Nearly horizontal trajectories in a periodic Lorentz gas.}
    \label{FigLG}
\end{figure}

It is known in the studies of periodic Lorentz gases that if the
vertical velocity of the moving particle before the collision is
$\varepsilon$, then after the collision it is typically of order
$\sqrt{\varepsilon}$, i.e., $v_1'-v_2'=\cO(\sqrt{\varepsilon})$; see
Figure~\ref{FigLG}. More precisely, there are $p,q>0$ such that the
relative measure of initial conditions for which
$v_1'-v_2'=\cO(\varepsilon^{1/2-p})$ is $\cO(\varepsilon^q)$; see
\cite{SV}. Thus in the course of infinitely many successive collisions
between the disks their vertical velocities would explode sooner or
later with probability one, violating the assumption $c_{\infty}<2$.
This argument is not quite formal, as the word ``probability'' here
refers to the canonical measure $\nu_0$, and in our dynamics the images
of this measure keep changing, but we believe the conclusion is
correct.

To summarize, we proved that in the limit regime the particles move
horizontally to the right. We also conjecture that with probability one
their horizontal velocities are equal:
\beq \label{uuvv}
   u_1=\cdots=u_N=1
         \qquad\text{and}\qquad
   v_1=\cdots=v_N=0.
\eeq
In other words, almost all trajectories in $W_+$ converge to a limit
regime where all the particles move horizontally and at unit speed.
This is a stable limit regime, it corresponds to a submanifold $S_+$.
This submanifold is defined by equations \eqref{uuvv}, hence its
dimension is $2N$ (corresponding to the free coordinates $(x_i,y_i)$ of
all the particles).

There is a symmetric region $W_-$ in phase space defined by
\beq  \label{W-}
       W_-=\{u_1+\cdots+u_N<-\sqrt{N(N-1)}\},
\eeq
where all the particles move in the negative $x$ direction. By a
similar argument, almost all trajectories $X\in W_-$ converge to the
stable limit regime where all the particles move horizontally to the
left, and (we again conjecture) at unit speed:
\beq \label{uuvv-}
   u_1=\cdots=u_N=-1
         \qquad\text{and}\qquad
   v_1=\cdots=v_N=0.
\eeq
This corresponds to a submanifold $S_-$ of dimension $2N$.

Note that we have two stable limit regimes. Each one attracts a part of
$\cM$ that has a positive Lebesgue measure. Their basins of attraction
are obviously disjoint, and due to symmetry they must have the same
$\nu_0$-measure. Thus we have a coexistence of two attracting
mechanisms in phase space.

\begin{figure}[htb]
    \centering
   \includegraphics[width=4in]{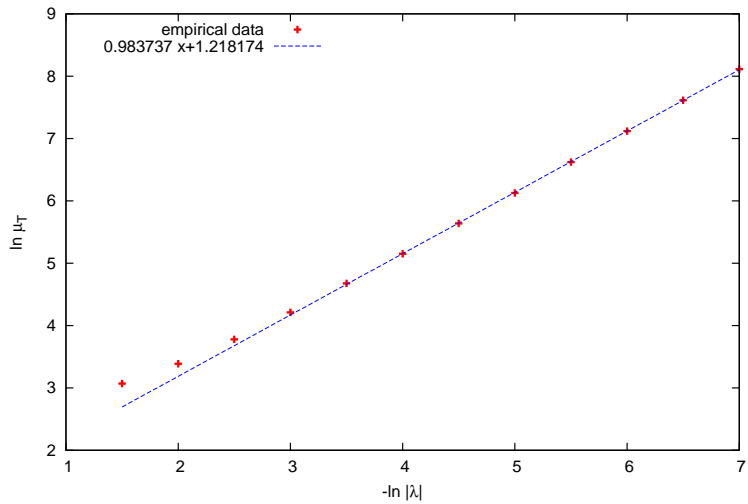}
    \caption{Plot of $\ln\mu_\tau$ versus $-\ln |\lambda|$. The
    least squares fit has slope of 0.983737.}
    \label{Figloglog1}
\end{figure}

Just like in Section~\ref{Holes} we estimated the escape time numerically.
We used $N=2$ disks of diameter $d=0.1$ with the collision rules at the
walls defined by \eqref{f0cot}--\eqref{f1cot} for various small
$\lambda<0$. Figure~\ref{Figloglog1} shows the mean escape time $\mu_\tau$
versus $\lambda$ on the log-log scale. The plot clearly demonstrates a
linear pattern and the least square fitting line has slope 0.984
suggesting that $\mu_\tau \sim |\lambda|^{-1}$. Thus typical trajectories
now escape much faster than in the case of stable center. We discuss the
reason for the faster escape below.

Our main argument in Section~\ref{Holes} used the central limit theorem
and was based on the chaotical character of the motion of the balls
(before they enter the stable regime). However when the balls collide with
the walls, our twisting rules \eqref{f0cot}--\eqref{f1cot} with
$\lambda<0$ make the horizontal components of their velocities increase,
thus the $x$ component of the total momentum, $u_1+u_2$, tends to grow (in
absolute value). This tendency causes a drift toward larger values of
$|u_1+u_2|$, i.e., toward the trapping regions $W_{\pm}$. Thus we should
regard $u_1+u_2$ as a one-dimensional It\^o diffusion process with a
non-zero drift \cite{O,RY}.

\begin{figure}[htb]
    \centering
   \includegraphics[width=4in]{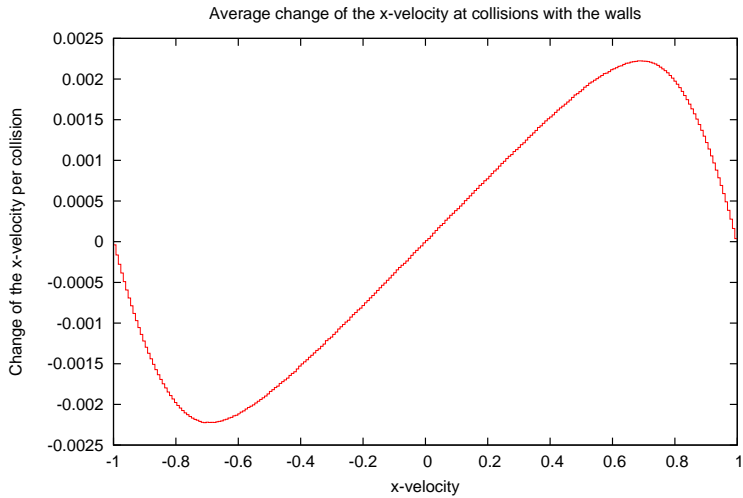}
    \caption{Average change of the $x$ component of the
disks velocity $u$ versus its current value.}
    \label{FigCM}
\end{figure}

Figure~\ref{FigCM} described the degree of the drift: it shows the average
change of $u$, the $x$ component of the disk velocity, versus its current
value (the plot was computed for $\lambda=-0.015$). We see that the
absolute value $|u|$ always tends to increase, and this tendency is strong
everywhere except when $u$ is close to 0 or $\pm 1$. In
Section~\ref{Holes} we dealt with the balls that were approaching the hole
$U_0$; then they moved nearly vertically, hence the drift was small and
could be ignored. Now the balls are approaching the regions $W_{\pm}$;
thus the drift is almost at its peak and its effect is crucial. Since the
average drift, per collision, if of order $|\lambda|$, typical
trajectories reach the regions $W_{\pm}$ of order $|\lambda|^{-1}$.

Figure~\ref{FigCM} also shows that the drift persists as long as $|u|<1$,
though it decreases as $|u|\to 1$. This supports our conjecture that
typical trajectories converge to a limit regime where all the particles
move at the same unit speed.

\section{Generalizations}

To summarize, the dynamics with unstable center ($\lambda<0$) is quite
different from the one with stable center ($\lambda>0$). It has two
distinct limit regimes ($S_+$ and $S_-$), both have dimension $2N$,
while in the other case we had a single stable regime $S$ with
dimension $3N-1$.

The attracting regions $W_{\pm}$ (``holes'') are of fixed size and
measure (independent of $\lambda$), while for stable center the
``hole'' $U_0$ was quite narrow and had measure $\cO(\lambda^N)$.

Next we generalize our results. Our analysis of the stable center case
applies to any stable fixed point of the map $g= f_1\circ f_0$ inside the
interval $(0,\pi)$. Every such point produces a $T$-invariant open set
$U_0 \subset \cM$, i.e., $T(U_0)\subset U_0$, which acts as a ``hole''. It
has a basin of attraction in $\cM$ of positive Lebesgue measure, and the
limiting stable regime consists of disks moving with periodic incidence
angles (assuming that the disks are small enough to avoid colliding with
each other).

Of course if the map $g\colon [0,\pi] \to [0,\pi]$ has more than one
stable fixed point, the corresponding basins of attraction are
disjoint, so we have a coexistence of several attracting regimes.

Our analysis of the unstable center case applies to
any map $g$ with stable fixed end points, 0 and $\pi$.
In that case we have two attracting regimes similar to
$S_+$ and $S_-$ above, and each has its own basin of
attraction.

We must note that in the general case the ``holes'' $W_{\pm}$ need be
defined more cautiously than \eqref{W+} and \eqref{W-}. Precisely, we
need to choose a small $\varepsilon_0>0$ and define $W_{\pm}$ by
$$
       W_{\pm}=\{\pm(u_1+\cdots+u_N)>N-\varepsilon_0\}.
$$
It is easy to verify that there is a $c=c(\varepsilon_0)>0$ such that
for every phase state in $W_{\pm}$ we have
$$
         \pm u_i>1-c
         \qquad\text{and}\qquad
         |v_i|<c
$$
for all $i=1,\ldots,N$. Moreover, $c\to 0$ as $\varepsilon_0\to 0$. So
we can choose, for example, $c=0.1$ and fix the corresponding
$\varepsilon_0>0$ in the definition of $W_{\pm}$. Then of our analysis
of the convergence to $S_{\pm}$ will not require significant changes.

In summary, every stable fixed point of the map $g\colon [0,\pi] \to
[0,\pi]$ leads to a stable regime that attracts a set of positive
Lebesgue measure in phase space.

In our numerical experiments, we mostly observed stable regimes for
$N=2$ disks. Our analysis shows that they exist for larger $N$'s, too,
but they become increasingly difficult to observe experimentally.
Indeed, the sizes of the holes in phase space are exponentially small
in $N$ (cf.\ \eqref{lambdaN}, and a similar estimate can be derived for
\eqref{W+}), hence the escape time grows exponentially with $N$. Thus
in physical systems with a large number of molecules the stable regimes
become almost unobservable. Still, they exist for any $N$, and they
dominate the dynamics for small $N$'s.

One may wonder whether attracting regimes exist when $g$ has no stable
fixed points. For example, what if $g(\varphi)=\varphi$ is the identity
map? The next section presents the most striking result: even in that
case stable regimes may exist, which attract almost every phase
trajectory!

\section{Time reversible collisions with walls}

Of special physical interest are twisting collisions
\eqref{f} that make the dynamics \emph{time
reversible}. This means that if we reverse the
velocity vectors of all our disks, then they will move
along their past trajectories backwards. By direct
inspection, the collision rule \eqref{f} is time
reversible if and only if the function $f$ satisfies
\beq \label{symm}
   f\bigl(\pi-f(\varphi)\bigr)=\pi-\varphi.
\eeq
This condition implies that the graph of the function $\psi=f(\varphi)$
is symmetric about the line $\psi=\pi-\varphi$.

If our collision rule is time reversible, i.e., satisfies \eqref{symm},
then we immediately arrive at $g(\varphi) = \varphi$.

\begin{figure}[htb]
    \centering
   \includegraphics[width=3in]{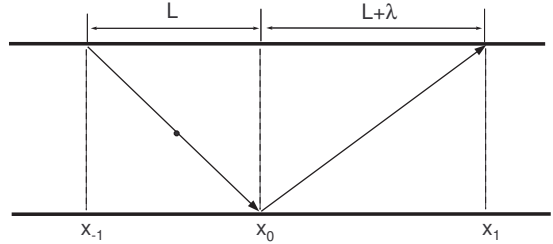}
    \caption{Collision rule pushing the particles to the right.}
    \label{FigV}
\end{figure}

A simple example of a time-reversible rule is
\beq  \label{fcot0k}
    \cot f_k(\varphi) = (-1)^k\lambda + \cot\varphi
\eeq
One can easily see that this definition is consistent with
\eqref{ffopp} and \eqref{symm}. The rule \eqref{fcot0k} can be written
in the notation of \eqref{vvvv} as
\beq  \label{vvuu}
         u^+/|v^+| = u^-/|v^-| + (-1)^k\lambda.
\eeq
This formula has a simple geometric interpretation: suppose a particle
collides with a wall at a point with $x$-coordinate $x_0$, and we
extend its trajectories before and after the collision until they cross
the other wall at points whose $x$-coordinates we denote by $x_{-1}$
and $x_1$, respectively (see Figure~\ref{FigV}), then
$$
    x_1-x_0 = x_0-x_{-1} + (-1)^k\lambda.
$$
If, for example, $\lambda>0$, then the above relation implies that the
bottom wall $y=0$ pushes the particles to the right, and the top wall
-- to the left. This creates shear flow in the channel of the type
studied in \cite{CL97}.

Next we describe a special regime in the above shear flow for $N=2$
disks. Suppose the disks move with opposite velocity vectors (i.e.,
their total momentum is zero) and they collide with opposite walls
simultaneously. Then after the collision they again move with opposite
velocity vectors.

When the disks collide with each other, then by symmetry their point of
contact has coordinate $y=0.5$, i.e., the collision occurs right in the
center of the channel. After the collision the disks move with opposite
velocity vectors again, and will collide with the opposite walls
simultaneously, etc. Hence this regime is invariant under our dynamics.
See illustration in Figure~\ref{FigSymm}.

\begin{figure}[htb]
    \centering
   \includegraphics[width=4in]{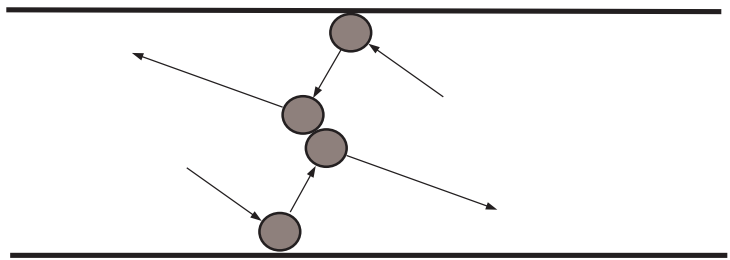}
    \caption{Special regime $S^\ast$ with symmetric motion.}
    \label{FigSymm}
\end{figure}

We denote the above special family of phase trajectories by $S^\ast$.
Note that dim$\,S^\ast = 3$, while the (reduced) phase space is
6-dimensional, cf.\ Section~\ref{Model}.

A striking fact we discovered by numerical simulations is that the
regime $S^\ast$ is stable and attracting. Almost every randomly
selected phase trajectory eventually stabilizes near $S^\ast$ and then
evolves in a symmetric fashion so that the disks move with opposite
velocity vectors at equal distances from the opposite walls.

Actually, the regime $S^\ast$ may be also stable for the dynamics with
an unstable center (Section~\ref{CUS}), but this happens only for
relatively large perturbations ($\lambda>0.6$). Since we are primarily
interested in small perturbations, we will not discuss this last fact.

We provide a semi-heuristic argument showing that the regime $S^\ast$
is stable for the time-reversible collision rules, i.e., phase
trajectories near $S^\ast$ tend to get closer to $S^\ast$ in the
future.

We perturb the symmetries of the regime $S^\ast$ and show that
perturbations tend to decrease, on average. There are two symmetries in
the regime $S^\ast$: the velocity vectors of the disks are opposite,
$\bv$ and $-\bv$, and their $y$-coordinates sum up to one, i.e.,
$y_1+y_2=1$.

First we perturb the velocity symmetry, i.e., suppose that the velocity
vectors are $\bv+\delta\bv$ and $-\bv$, i.e., the total momentum is a
small $\delta\bv \neq 0$. The interparticle collisions do not change
the total momentum. When the disks collide with (opposite) walls, then
the postcollisional velocity vectors will be denoted by
$\bv'+\delta\bv'$ and $-\bv'$. Our goal is to show that $|\delta \bv'|$
tends to be smaller than $|\delta\bv|$, on average.

We decompose $\delta\bv = \delta\bv_{\|} + \delta\bv_\perp$ into the
components parallel and perpendicular to $\bv$, respectively.
Similarly, $\delta\bv' = \delta\bv_{\|}' + \delta\bv_\perp'$ are the
components of $\delta\bv'$ parallel and perpendicular to $\bv'$. Due to
the conservation of the kinetic energy of each disk at the collision
with the wall we have
$$
      |\bv'|=|\bv|
      \qquad\text{and}\qquad
      |\bv'+\delta\bv'|=|\bv+\delta\bv|
$$
hence
\beq  \label{vv||}
   |\delta \bv_{\|}'| = |\delta \bv_{\|}|,
\eeq
to the leading order. We will show that $|\delta \bv_{\perp}'| <
|\delta \bv_{\perp}|$, on average.

Let $\varphi$ and $\varphi + \delta\varphi$ denote the directional
angles (i.e., angles made with the positive $x$ axis) of the velocity
vectors $\bv$ and $\bv+ \delta \bv$, respectively. Let $\psi$ and $\psi
+ \delta\psi$ denote the directional angles of the velocity vectors
$\bv'$ and $\bv' + \delta \bv'$, respectively. Then
$$
   \frac{|\delta \bv_{\perp}|}{|\bv|} = |\delta \varphi|
   \qquad \text{and} \qquad
   \frac{|\delta \bv_{\perp}'|}{|\bv'|} = |\delta \psi|
$$
to the leading order, hence
\beq  \label{dd}
   \frac{|\delta \bv_{\perp}'|}{|\delta \bv_{\perp}|}
   = \frac{|\delta \psi|}{|\delta\varphi|} = |f_k'(\varphi)| = f_k'(\varphi)
\eeq
(recall that $f_k$ is monotonically increasing, i.e., $f_k'>0$).

Now after the collisions with the walls the particles move across the
channel, and they either collide with each other or they miss each
other and hit the opposite walls. In the latter case we denote by
$\bv''+\delta\bv''$ and $-\bv''$ their new postcollisional velocity
vectors, and again decompose $\delta\bv'' = \delta\bv_{\|}'' + \delta
\bv_\perp''$. Then, inductively,
$$
  |\delta \bv_{\|}''| = |\delta \bv_{\|}'| = |\delta \bv_{\|}|
$$
and
$$
   \frac{|\delta \bv_{\perp}''|}{|\delta \bv_{\perp}|}
   = \frac{|\delta \bv_{\perp}''|}{|\delta \bv_{\perp}'|}
   \frac{|\delta \bv_{\perp}'|}{|\delta \bv_{\perp}|}
   = f_{1-k}'(\psi)f_k'(\varphi) = g'(\varphi)=1,
$$
therefore $|\delta\bv''| = |\delta\bv|$.

We see that as long as the particles collide with the walls only, the
perturbation vector $\delta \bv$ changes periodically, with period two.
So for an even number of wall collisions between two successive
interparticle collisions, the net result will be zero change, i.e.,
$|\delta \bv|$ will remain the same. For an odd number of wall
collisions between two successive interparticle collisions, the net
result will be the same as for just one wall collision, i.e.,
\eqref{dd} will apply. Next we relate $|\delta\bv_{\perp}|$ to
$|\delta\bv|$.

Consider an interparticle collision. It preserves the entire vector
$\delta \bv$, but the direction of the postcollisional velocity vector
$\bv$ can be regarded as a random variable uniformly distributed in the
entire range $[0,2\pi]$, due to the scattering nature of the elastic
collisions of hard disks. Let $\beta$ denote the directional angle of the
vector $\delta \bv$. Let $\varphi$ denote, as usual, the directional angle
of the outgoing velocity vector $\bv$. Then $|\delta\bv_{\perp}| =
|\delta\bv|\,|\sin(\beta-\varphi)|$. The perturbation $\delta \bv'$ at the
next interparticle collision will be
\beq  \label{ddlog1}
   |\delta \bv'|^2 = |\delta \bv|^2 \bigl(\cos^2(\beta-\varphi)
+ \varkappa \sin^2(\beta-\varphi)\bigr)
\eeq
where $\varkappa=[f_k']^2$ for an odd number of intermediate wall
collisions and $\varkappa=1$ for an even number of intermediate wall
collisions.

Next we estimate the total change of the norm $|\delta \bv|$ over a
long period of time $(0,T)$ during which $n$ interparticle collisions
occur. We have $n-1$ intervals between successive interparticle
collisions, and some of them (say, $m \leq n-1$ of them) have an odd
number of collisions of each particle with the walls, while others
(i.e., $n-m-1$ intervals) have an even number of collisions of each
particle with the walls. Our previous analysis can be summarized as
\beq  \label{ddlogm}
   \log|\delta \bv_T| - \log|\delta \bv_0|
   = \tfrac 12\,\sum_{i=1}^m \log\bigl[1-
   \bigl(1-[f_{k_i}'(\varphi_i)]^2\bigr)\sin^2(\beta_i-\varphi_i)\bigr],
\eeq
where the summation is taken over the intervals with odd numbers of
wall collisions.

Due to the randomization caused by the scattering effect of the
interparticle collisions we can treat the angles $\beta_i$ and
$\varphi_i$'s as independent random variables with uniform distribution in
their ranges $0\leq \beta_i<2\pi$ and $0\leq \varphi_i\leq\pi$. We will
prove in Appendix that the average value of each term in \eqref{ddlogm} is
negative:

\begin{lemma}  \label{LmI1}
For both $k=0,1$ we have
$$
  \int_0^{\pi}\int_0^{2\pi}\log\bigl[1-
   \bigl(1-[f_{k}'(\varphi)]^2\bigr)\sin^2(\beta-\varphi)
   \bigr]\, d\beta \, d\varphi =
   \mu_1<0.
$$
\end{lemma}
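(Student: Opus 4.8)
The plan is to evaluate the inner integral over $\beta$ in closed form and then reduce the whole statement to a one-line convexity inequality. First I would exploit the shift-invariance of integration over a full period: writing $a=a(\varphi)=1-[f_k'(\varphi)]^2$, the substitution $\theta=\beta-\varphi$ turns the inner integral into
\[
   \int_0^{2\pi}\log\bigl[1-a\sin^2\theta\bigr]\,d\theta ,
\]
which no longer depends on $\varphi$ except through $a$. Since $f_k$ is a strictly increasing homeomorphism we have $f_k'>0$, hence $a<1$ and the integrand stays strictly positive, so the integral is well defined.

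Next I would evaluate this standard log-integral. Using $\sin^2\theta=\tfrac12(1-\cos2\theta)$ to write $1-a\sin^2\theta=(1-\tfrac a2)+\tfrac a2\cos2\theta$, together with the classical formula $\int_0^{2\pi}\log(c+d\cos\phi)\,d\phi=2\pi\log\frac{c+\sqrt{c^2-d^2}}{2}$ (valid for $c>|d|$) and the identity $c^2-d^2=1-a=[f_k'(\varphi)]^2$, a short computation collapses everything to
\[
   \int_0^{2\pi}\log\bigl[1-a\sin^2\theta\bigr]\,d\theta
   =4\pi\,\log\frac{1+f_k'(\varphi)}{2}.
\]
(As a sanity check, the twist-free case $f_k'\equiv1$ gives $0$, as it must.) Thus
\[
   \mu_1=4\pi\int_0^{\pi}\log\frac{1+f_k'(\varphi)}{2}\,d\varphi.
\]

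Finally I would prove $\mu_1<0$ by concavity of the logarithm. The normalization $f_k(0)=0$, $f_k(\pi)=\pi$ gives $\frac1\pi\int_0^\pi f_k'\,d\varphi=1$, so $\frac{1+f_k'}{2}$ has mean exactly $1$ against the uniform probability measure on $[0,\pi]$. Jensen's inequality for the concave function $\log$ then yields
\[
   \frac1\pi\int_0^\pi\log\frac{1+f_k'(\varphi)}{2}\,d\varphi
   \le\log\Bigl(\frac1\pi\int_0^\pi\frac{1+f_k'(\varphi)}{2}\,d\varphi\Bigr)=\log 1=0 ,
\]
with equality only when $f_k'$ is constant, i.e. when $f_k$ is the identity. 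For any nontrivial twist $f_k$ is non-affine, so the inequality is strict and $\mu_1<0$.

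The computation is essentially routine; the only points needing care are the shift-invariance reduction (legitimate precisely because $\beta$ is integrated over a full period) and the justification of strictness. I expect the one genuinely substantive step to be recognizing that the $\beta$-integral admits the closed form above, after which the negativity is \emph{forced} by Jensen together with the single constraint $\int_0^\pi f_k'\,d\varphi=\pi$; no finer information about the shape of $f_k$ is required.
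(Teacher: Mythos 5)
Your proposal is correct and follows essentially the same route as the paper: evaluate the inner $\beta$-integral in closed form as $4\pi\log\bigl[\tfrac12\bigl(1+f_k'(\varphi)\bigr)\bigr]$, then apply Jensen's inequality using the constraint $\int_0^\pi f_k'\,d\varphi=\pi$. In fact you supply two details the paper leaves implicit --- the explicit computation of the $\beta$-integral via the classical formula for $\int_0^{2\pi}\log(c+d\cos\phi)\,d\phi$, and the strictness of the inequality (equality in Jensen forces $f_k'\equiv 1$, i.e.\ the identity map) --- so your write-up is, if anything, more complete.
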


Thus the sum in \eqref{ddlogm} approaches $-\infty$ linearly in $m$
(i.e., linearly in time), hence the norm of the perturbation $\delta
\bv$ decreases exponentially in time.

Second, we perturb the other symmetry of the regime $S^\ast$, i.e., we
suppose that an interparticle collision occurs slightly above or below
the central line $y=0.5$ of the channel. More precisely, let the point
of contact at the moment of collision have coordinate $y=0.5+\ell$. At
the same time we suppose that the particles have opposite velocity
vectors, $\bv = (u,v)$ and $-\bv = (-u,-v)$, i.e., their total momentum
is zero.

When the particles collide with the opposite walls, their
postcollisional velocity vectors $(u',v')$ and $(-u',-v')$ will again
be opposite. But the particles collide with the walls at slightly
different moments of time, and the time interval between their
collisions with the walls will be $2|\ell/v|$. If, after those
collisions with the walls the particles collide with each other again,
the point of contact will have coordinate $y=0.5+\ell'$ with $|\ell'| =
|\ell v'/v|$, i.e.,
\beq  \label{ell1}
   \log |\ell'| - \log |\ell| = \log \bigl(|v'|/|v|\bigr)
   = \log \bigl(\sin f(\varphi)/ \sin\varphi\bigr).
\eeq

However, if the particles miss each other, then after their second
collision with the opposite walls their velocities will be again
$(u,v)$ and $(-u,-v)$. If the particles collide with each other after
that, the point of contact will be again the distance $|\ell'| =
|\ell|$ from the central line $y=0.5$.

Thus the parity issue again arises and the formula \eqref{ell1} applies
whenever the number of wall collisions between successive interparticle
collisions is odd; otherwise there is no change, $|\ell'| = |\ell|$.
Thus, in the notation of \eqref{ddlogm} we have
\beq  \label{ellm}
   \log|\ell_T| - \log|\ell_0|
   \sim \sum_{i=1}^m \log \bigl( \sin f_{k_i}(\varphi_i)/ \sin\varphi_i\bigr).
\eeq

Again we treat $\varphi_i$'s as independent uniformly distributed
random variables and verify that the mean value of $\log \bigl(\sin
f(\varphi)/ \sin\varphi\bigr)$ is negative. Proof of the following
lemma will be given in Appendix.

\begin{lemma}  \label{LmI2}
For both $k=0,1$ we have
$$
  \frac{1}{\pi}\,
  \int_0^{\pi} \log \bigl(\sin f_k(\varphi)/ \sin\varphi\bigr)\, d\varphi =
   \mu_2<0.
$$
\end{lemma}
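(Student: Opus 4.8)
The plan is to show that the integrand $\log\bigl(\sin f_k(\varphi)/\sin\varphi\bigr)$, integrated against uniform measure on $[0,\pi]$, is strictly negative. First I would reduce to the case $k=0$: by the relation \eqref{f1cot} both walls obey the same rule $f_1=f_0$ in our special family \eqref{f0cot}, so the two cases coincide and I only need to treat $f=f_0$ with $\tan f(\varphi)=e^\lambda\tan\varphi$. The key structural observation is the symmetry \eqref{symm}, which for our rule gives $f(\pi-f(\varphi))=\pi-\varphi$; equivalently, the map $\varphi\mapsto\pi-f(\varphi)$ is an involution of $[0,\pi]$. I would exploit this to pair up the contributions from $\varphi$ and from $\varphi^\ast:=\pi-f(\varphi)$.

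The main computational step is to change variables. Write $I=\int_0^\pi \log\sin f(\varphi)\,d\varphi-\int_0^\pi\log\sin\varphi\,d\varphi$, and in the first integral substitute $\omega=f(\varphi)$, so $d\varphi=(f^{-1})'(\omega)\,d\omega=d\omega/f'(f^{-1}(\omega))$. This turns $I$ into a single integral $\int_0^\pi \log\sin\omega\,\bigl[(f^{-1})'(\omega)-1\bigr]\,d\omega$, which measures how $f^{-1}$ redistributes mass relative to the identity. Using the explicit derivative from the text, $f'(\varphi)=e^{-\lambda}\sin^2 f(\varphi)/\sin^2\varphi$, one obtains a clean closed form for $(f^{-1})'$ in terms of $\sin$ and $\cos$, and the integral becomes an elementary (if slightly tedious) trigonometric integral in $\lambda$. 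I expect $I=I(\lambda)$ to satisfy $I(0)=0$ and $I(\lambda)<0$ for all $\lambda\neq 0$, with a manifestly negative second derivative at $\lambda=0$, so the negativity is quadratic in the perturbation strength to leading order — consistent with the small-twist heuristic used in the body of the paper.

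A cleaner route that avoids evaluating the integral outright is a convexity/Jensen argument. Since $\log\sin$ is strictly concave on $(0,\pi)$ and $f$ is an orientation-preserving homeomorphism that is not the identity, I would write the quantity as $\int_0^\pi\log\bigl(\sin f(\varphi)/\sin\varphi\bigr)\,d\varphi$ and show that the measure $f_\ast(\mathrm{Leb})$ is a nontrivial rearrangement of Lebesgue measure that, by strict concavity of $\log\sin$ together with the symmetry $f(\pi/2)=\pi/2$, strictly decreases the average of $\log\sin$. Concretely, the involution symmetry lets me fold the integral onto $[0,\pi/2]$ and compare $\log\sin f(\varphi)+\log\sin f(\pi-\varphi)$ against $2\log\sin\varphi$ pointwise; strict concavity gives the strict inequality wherever $f(\varphi)\neq\varphi$, which holds on a set of positive measure since $\lambda\neq 0$.

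The hard part will be making the pointwise or convexity comparison genuinely tight rather than merely suggestive: one must verify that the rearrangement induced by $f$ moves mass toward the endpoints $0,\pi$ (where $\log\sin\to-\infty$) rather than toward $\pi/2$, and that this holds for \emph{both} signs of $\lambda$. I expect the explicit-integral approach to be the safest, since it produces a formula whose sign can be checked directly; the convexity argument is more elegant but requires care to confirm that the direction of the rearrangement is correct uniformly in $\varphi$. Either way the conclusion is $\mu_2<0$ as claimed. (The full evaluation is deferred to the Appendix.)
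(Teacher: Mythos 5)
Your proposal breaks down at its very first step: the reduction to the family \eqref{f0cot}. Lemma~\ref{LmI2} belongs to the section on \emph{time-reversible} collision rules, where the maps $f_k$ satisfy \eqref{symm} — for instance the family \eqref{fcot0k}, $\cot f_k(\varphi)=(-1)^k\lambda+\cot\varphi$ — and these are \emph{not} the maps \eqref{f0cot}. The family $\tan f_0(\varphi)=e^{\lambda}\tan\varphi$ does not satisfy \eqref{symm} at all (one checks $\tan f_0(\pi-f_0(\varphi))=-e^{2\lambda}\tan\varphi\neq\tan(\pi-\varphi)$ unless $\lambda=0$; indeed the paper notes that time reversal of \eqref{f0cot} corresponds to negating $\lambda$), so the ``key structural observation'' you invoke is unavailable for the $f$ you chose. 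Worse, for \eqref{f0cot} with $\lambda>0$ the conclusion of the lemma is simply \emph{false}: that map has a stable center, i.e.\ it moves every angle toward $\pi/2$ where $\sin$ is maximal, so $\sin f_0(\varphi)>\sin\varphi$ for all $\varphi\notin\{0,\pi/2,\pi\}$ and the integral is strictly positive. No correct argument can start from your reduction.

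Even for the correct class of maps (those satisfying \eqref{symm}), both of your routes have gaps. The folding/concavity argument cannot be made pointwise: for \eqref{fcot0k} one computes that $\sin f(\varphi)\sin f(\pi-\varphi)\leq\sin^2\varphi$ holds if and only if $\cot^2\varphi\leq 1+\lambda^2/2$, so the desired inequality \emph{reverses} near the endpoints $0,\pi$; and if instead you pair $\varphi$ with its involution image $\varphi^\ast=\pi-f(\varphi)$, then \eqref{symm} gives $\log\sin f(\varphi^\ast)=\log\sin\varphi$ and $\log\sin\varphi^\ast=\log\sin f(\varphi)$, so the two integrands cancel \emph{exactly} and all the negativity sits in the Jacobian of the involution — precisely what Jensen's inequality does not see. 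This is the subtlety the paper's proof is built around: it sets $s=(f(\varphi)+\varphi)/2$, $w=(f(\varphi)-\varphi)/2$, uses \eqref{symm} to get $w(\pi-s)=w(s)$, observes that $\int_0^\pi\log\bigl(\sin(s+w)/\sin(s-w)\bigr)\,ds=0$ by that symmetry so that $\mu_2=-\tfrac1\pi\int_0^\pi\log\bigl(\sin(s+w)/\sin(s-w)\bigr)\,w'\,ds$, and then proves this last integral is positive by a homotopy argument ($F(t)=\int_0^\pi\log\bigl(\sin(s+tw)/\sin(s-tw)\bigr)\,w'\,ds$ satisfies $F(0)=0$ and $F'(t)>0$ after an integration by parts killing the boundary terms). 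Your explicit-integral route can be salvaged, but only for the concrete rule \eqref{fcot0k} and with the correct derivative $f_k'(\varphi)=\sin^2 f_k(\varphi)/\sin^2\varphi$ (not the formula you quote): substituting $c=\cot\varphi$ reduces the integral to $-\tfrac12\int_{\IR}\log(1+c^2)\bigl[\tfrac{1}{1+(c\mp\lambda)^2}-\tfrac{1}{1+c^2}\bigr]\,dc=-\tfrac{\pi}{2}\log\bigl(1+\lambda^2/4\bigr)<0$. That would prove the lemma for this one family, whereas the paper's argument covers every rule whose graph is symmetric about $\psi=\pi-\varphi$.
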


Thus the sum in \eqref{ellm} approaches $-\infty$ linearly in $m$
(i.e., linearly in time), hence the magnitude of the positional
perturbation $\ell$ decreases exponentially in time.

This all verifies the stability of the regime $S^\ast$. Indeed, our
perturbations by $\delta\bv$ account for two directions transversal to
$S^\ast$, and those by $\ell$ account for one more, hence we took care of
all the three codimensions in the (reduced) 6-dimensional phase space.

We note that the classical (unperturbed) system of $N=2$ hard balls also
leaves the manifold $S^\ast$ invariant. The dynamics within $S^\ast$
easily reduce to a periodic Lorentz gas with a single particle and
infinite horizon. The periodic Lorentz gas is strongly hyperbolic and
ergodic, thus the phase points $X\in S^\ast$ have one positive and one
negative Lyapunov exponents, and typical phase trajectories within the
manifold $S^\ast$ fill it densely. Interestingly, there are no expansion
or contraction in any transversal direction to $S^\ast$, i.e., the points
$X\in S^\ast$ have exactly one positive and one negative exponents with
respect to the unperturbed dynamics in the entire phase space. This makes
the manifold $S^\ast$ exceptional, as typical phase points are proven
\cite{Sy99} to have two positive and two negative Lyapunov exponents, cf.\
Section~\ref{Model}.

We now describe what happens under our time-reversible perturbations.
First, all the (previously zero) Lyapunov exponents in the directions
transversal to $S^\ast$ seem to become negative, which instantly makes
$S^\ast$ an attractor. (Here we refer to Lyapunov exponents of
\emph{typical} points $X\in S^\ast$.) Second, the (previously equivalent
to the Lorentz gas) dynamics within $S^\ast$ is also perturbed, and it
seems to retain its hyperbolic character and admit an SRB measure. For
periodic Lorentz gases with infinite horizon under small perturbations the
hyperbolicity is proven and a (unique) SRB measure is constructed in
\cite{CD09}. Perhaps the arguments of \cite{CD09} could work in our case,
too.

As a result, the SRB measure living on $S^\ast$ seems to attract nearly
entire phase space. This makes it (the only) physically observable
measure, as it describes the distribution of typical phase trajectories.
Therefore, some kind of chaotic behavior does exist in the present case,
but only after the six-dimensional phase space is reduced to the 3D
surface $S^\ast$.

Lastly, we tried to find similar stable regimes for $N\geq 3$ balls with
time-reversible twists at the walls, and did not observed any -- the
dynamics seem to be totally chaotic.

\medskip \noindent\textbf{Acknowledgement.} This work was motivated by
discussions with J.~Lebowitz and Ya.~Sinai in March 2011, and we
acknowledge the hospitality of Rutgers University and Princeton
University. We thank D. Dolgopyat, J. Lebowitz, and A. Neishtadt for
many useful remarks. N.~C. is partially supported by NSF grant
DMS-0969187. We are also grateful to the Alabama supercomputer
administration for computational resources.

\section*{Appendix}

First we prove Lemma~\ref{LmI1}. We note that $f_k'(\varphi)>0$ and
$\int_0^\pi f_k'(\varphi) \, d\varphi=\pi$. With these assumptions the
integral over $\beta$ can be taken analytically. It results in
$$ \int_0^{2\pi}\log \left[ 1-\left(1-\left[f_k'(\varphi)\right]^2 \right)
\sin^2(\beta-\varphi)\right] \, d\beta =
4 \pi  \log\left[\frac{1}{2} \left(1+f_k'(\varphi)\right)\right] $$
Now Jensen's inequality works to show that
$$\frac{1}{\pi}\int_0^\pi \log\left[\frac{1}{2} \left(1+f_k'(\varphi)\right)\right] \,d\varphi
\leq \log \left[ \frac{1}{\pi}\int_0^\pi \log\left[\frac{1}{2} \left(1+f_k'(\varphi)\right)\right] \,d\varphi
 \right] = 0
$$
Lemma~\ref{LmI1} is proved. \qed

Now we prove Lemma~\ref{LmI2}. Our arguments apply to both functions
$f_0$ and $f_1$, so we suppress the index and denote them by $f$.

Recall that the graph of the function $\psi=f(\varphi)$ is symmetric
about the line $\psi=\pi-\varphi$; see Figure~\ref{FigGraph1}. Thus it
is convenient to use new variables
$$
    s = \frac{f(\varphi)+\varphi}{2}
    \qquad\text{and}\qquad
    w = \frac{f(\varphi)-\varphi}{2}.
$$
If we drop a perpendicular from the point $(\varphi, f(\varphi))$, that
lies on the graph, onto the main diagonal line $\psi=\varphi$, then its
footpoint will be $(s,s)$, and its length will be $w\sqrt{2}$. The
function $\psi=f(\varphi)$ becomes, in new variables, $w = w(s)$, and
due to the above symmetry we have $w(\pi-s) = w(s)$, i.e., $w$ is an
even function with respect to the center $s=\pi/2$. We also note that
$w(0)=w(\pi)=0$, $|w'|<1$, and
$$
   \varphi = s-w
    \qquad\text{and}\qquad
   f(\varphi) = s+w,
$$
therefore
$$
   d\varphi = (1-w')\, ds
    \qquad\text{and}\qquad
   \frac{df}{d\varphi} = \frac{df}{ds}\,\frac{ds}{d\varphi}
   =\frac{1+w'}{1-w'}.
$$

\begin{figure}[htb]
    \centering
   \includegraphics[width=5in]{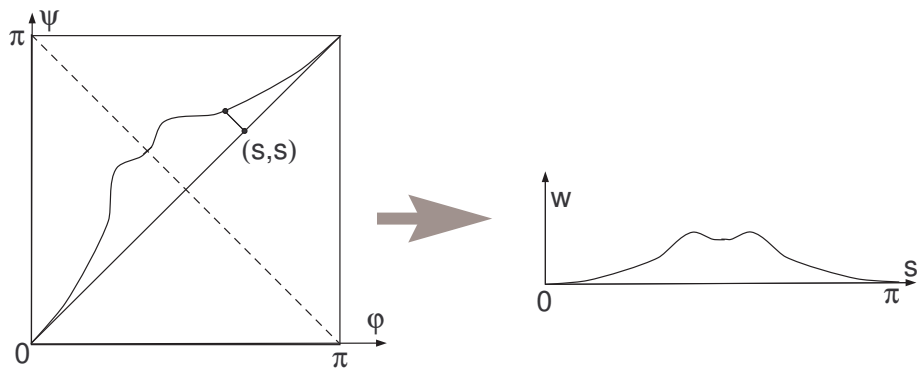}
    \caption{The graphs of $\psi = f(\varphi)$ and $w=w(s)$.}
    \label{FigGraph1}
\end{figure}

In these new variables the integral in Lemma~\ref{LmI2} becomes
\beq
    \mu_2 = \frac{1}{\pi}\,\int_0^\pi \log \frac{\sin (s+w)}{\sin (s-w)} (1-w') \, ds
\eeq

Note that
$$
    \int_0^\pi \log \frac{\sin (s+w)}{\sin (s-w)} \, ds = 0
$$
because of symmetry $w(\pi-s) = w(s)$.

We consider a function
\beq
    F(t):= \int_0^\pi \log \frac{\sin (s+tw)}{\sin (s-tw)} w' \, ds
\eeq
for $0 \leq t \leq 1$. Note that $F(0)=0$. We will show that $F'(t)>0$.
Indeed,
\begin{align*}
    2 F'(t) & = \int_0^\pi \left( \cot(s+wt) + \cot(s-wt) \right) \, d(w^2) \\
        & = \left.w^2\left( \cot(s+wt) + \cot(s-wt) \right) \right|_0^\pi \\
        & \;\;\;\; + \int_0^\pi \left( \frac{1+tw'}{\sin^2 (s+wt)} +
            \frac{1-tw'}{\sin^2 (s-wt)} \right) w^2 \, ds
\end{align*}
Recall that $w(0)=w(\pi)=0$ and $|w'|<1$. This makes the middle line
vanish. The integrand is always positive, which proves that $F'(t)>0$.
Then $F(t)>0$ for all $t>0$, and

$$ \mu_2 = -\tfrac{1}{\pi}\,F(1) < 0 \text{.}$$

Lemma~\ref{LmI2} is proved. \qed


\end{document}